\theoremstyle{plain}
\newtheorem{thm}{Theorem}
\newtheorem{cor}[thm]{Corollary}
\newtheorem{conj}[thm]{Conjecture}
\theoremstyle{definition}
\newcommand{\rmk}{\par\noindent{\it Remark. }}
\newcommand\set[1]{\left\{#1\right\}}
\def\Z{\mathbb{Z}}
\def\P{\mathcal{P}}
\def\*{^{*}}
\def\k{^{(k)}}
\def\one{^{(1)}}
\def\two{^{(2)}}
\def\three{^{(3)}}
\def\nm{_{n,m}}
\def\nkn{_{n,kn}}
\def\n1m1{_{n-1,m-1}}
\newcommand\aug[1]{{#1}^{+}}
\newcommand\di[1]{{#1}^{-}}
\def\ptd{\pi}
\def\dtp{\delta}
\mathchardef\mhyphen="2D
\title
{On Delannoy paths without peaks and valleys}
\author{Seunghyun Seo}
\address[Seunghyun Seo]{Department of Mathematics Education, Kangwon National University, Chuncheon, 24341, South Korea}
\email{shyunseo@kangwon.ac.kr}
\author{Heesung Shin$^\dag$}
\address[Heesung Shin]{Department of Mathematics, Inha University, 100 Inharo, Michuhol, Incheon, 22212, South Korea}
\email{shin@inha.ac.kr}
\thanks{\dag Corresponding author}
\date{\today}
\begin{document}
\maketitle
\begin{abstract}
A lattice path is called \emph{Delannoy} if each of its steps belongs to $\left\{N, E, D\right\}$, where $N=(0,1)$, $E=(1,0)$, and $D=(1,1)$ steps. \emph{Peak}, \emph{valley}, and \emph{deep valley} are denoted by $NE$, $EN$, and $EENN$ on the lattice path, respectively.

In this paper, we find a bijection between $\mathcal{P}_{n,m}(NE, EN)$ and a specific subset of ${\mathcal{P}_{n,m}}(D, EENN)$, where $\mathcal{P}_{n,m}(NE, EN)$ is the set of Delannoy paths from the origin to $(n,m)$ without peaks and valleys, and ${\mathcal{P}_{n,m}}(D, EENN)$ is the set of Delannoy lattice paths from the origin to $(n,m)$ without diagonal steps and deep valleys. We also enumerate the number of Delannoy paths without peaks and valleys on the restricted region $\left\{ (x,y) \in \mathbb{Z}^2 : y \ge k x \right\}$ for a positive integer $k$.
\end{abstract}



\section{Introduction}
For two lattice points $A$ and $B$,
a \emph{lattice path} from $A$ to $B$
is a sequence of lattice points
$$(v_{1}, v_{2},\dots ,v_{k})$$
of $\Z^2$
with $v_1 = A$ and $v_{k} = B$.
For each $i=1, \dots, k-1$, a consecutive difference $s_i = v_{i+1} - v_{i}$ of $(v_{1}, v_{2},\dots ,v_{k})$
is called a \emph{step} of the lattice path.
Conventionally, a lattice path can be represented as a word
$ s_1 s_2 \dots s_{k-1}$
with starting point $v_1$.

A $(0,1)$ step is called a \emph{north} step, denoted by $N$;
a $(1,0)$ step is called an \emph{east} step, denoted by $E$;
a $(1,1)$ step is called a \emph{diagonal} step, denoted by $D$.
A lattice path is called \emph{Delannoy} if each of its steps belongs to $\set{N, E, D}.$

Let $n$ and $m$ be nonnegative integers.
Let $\P\nm$ be the set of Delannoy paths from the origin to $(n,m)$.
It is well-known \cite[p. 81]{Com74} that 
$$
\#\P\nm
= \sum_{d=0}^n \binom{n+m-d}{n-d,m-d,d}
= \sum_{j=0}^{n} \binom{n}{j} \binom{m}{j} 2^j .
$$

If a pair $NE$ (resp. $EN$) appears consecutively on the lattice path,
it is called a \emph{peak} (resp. \emph{valley}).
If a quadruple $EENN$ appears consecutively on the lattice path,
it is called a \emph{deep valley}.

The collection of Delannoy paths that avoid all specific patterns $\omega_1, \dots, \omega_k$ is denoted as
$$\P\nm(\omega_1, \dots, \omega_k) = \set{P \in \P\nm : \text{There are no patterns $\omega_1, \dots, \omega_k$ in $P$}}.$$

Let $\P\nm(NE, EN)$ be the set of Delannoy paths without peaks and valleys in $\P\nm$.
To calculate the number of reduced alignments between two DNA sequences in Bioinformatics,
Andrade et al. \cite{AANT14} found that the cardinality of $\P\nm(NE, EN)$ is
\begin{align}
\sum_{i\ge0} (-1)^i \left[ \binom{n+m-3i}{n -2i, m -2i, i} - \binom{(n-1)+(m-1)-3i}{(n-1)-2i, (m-1)-2i,i} \right],
\label{eq:s_gen}
\end{align}
which is given by entry A047080 in the OEIS \cite{Slo18}.

In this paper, 
given a lattice path $P = (v_1, v_2, \dots, v_k)$,
with $v_0$ and $v_{k+1}$ satisfying
\begin{align*}
v_1-v_{0} = E\quad\text{and}\quad v_{k+1}-v_k = N,
\end{align*}
$\aug{P} = (v_0, v_1, \dots, v_{k+1})$ is called an \emph{augmented} path of $P$, and
$\di{P} = (v_2, \dots, v_{k-1})$ is called a \emph{diminished} path of $P$.
Given a set $S$ of lattice paths, $\aug{S}$ and $\di{S}$ are defined by
$$\aug{S} = \set{\aug{P} : P \in S} \quad\text{and}\quad \di{S} = \set{\di{P} : P \in S}.$$

Let $\di{\aug{\P\nm}(D, EENN)}$ be the set of
Delannoy paths from the origin to $(n,m)$ whose augmented path does not include diagonal steps or deep valleys.

We construct two bijections 
$\dtp$ from $\P\nm(NE, EN)$ to $\di{\aug{\P\nm}(D, EENN)}$
and
$\ptd$ from $\di{\aug{\P\nm}(D, EENN)}$ to $\P\nm(NE, EN)$.
$$
\begin{tikzcd}
\P\nm&
\P\nm\\
\P\nm(NE, EN) 
\arrow[u, no head, "\cup"] \arrow[r, shift left=.7, "\dtp"] &
\di{\aug{\P\nm}(D, EENN)} 
\arrow[u, no head, "\cup"] \arrow[l, shift left=.7, "\ptd" ]
\end{tikzcd}
$$

Let $k$ be a positive integer.
Let $\P\nkn\k$ be the set of Delannoy paths $P \in \P\nkn$ on the half-plane $y \geq kx$.
Song \cite{Son05} introduced a \emph{$k$-Schr\"oder path} $P$ of size $n$
if $P \in \P\nkn\k$ for nonnegative integers $n$,
where the cardinality of $\P\nkn\k$ equals 
$$
\#\P\nkn\k
= \sum_{d=0}^n \frac{1}{kn-d+1}\binom{kn+n-d}{kn-d,n-d,d}
= \frac{1}{n}\sum_{j=1}^{n} \binom{kn}{j-1} \binom{n}{j} 2^j.
$$

It is known \cite[A086581]{Slo18}
that the number of Delannoy paths without diagonal steps and deep valleys from $(0,0)$ to $(n,n)$ 
on the half-plane $y\geq x$ is as follows:
\begin{align*}
\#\P\one_{n,n}(D, EENN) &= \sum_{m\ge0} \frac{1}{m+1} {2m \choose m}{n+m \choose 3m}.
\end{align*}

In this paper, 
we prove that
$\P\k\nkn(NE, EN)$ 
and 
$\P\k\nkn(D, EENN)$ 
are equinumerous via two bijections $\dtp$ and $\ptd$.
$$
\begin{tikzcd}
\P\nkn(NE, EN) \arrow[r, shift left=.7, "\dtp"] & 
\di{\aug{\P\nkn}(D, EENN)} \arrow[l, shift left=.7, "\ptd" ] \\ 
\P\k\nkn(NE,EN) \arrow[u, no head, "\cup"] \arrow[r, shift left=.7, "\dtp"] &
\P\k\nkn(D, EENN) \arrow[u, no head, "\cup"] \arrow[l, shift left=.7, "\ptd" ]
\end{tikzcd}
$$

For $k=1$ or $2$,
we find the numbers of Delannoy paths without peaks and valleys from $(0,0)$ to $(n,kn)$ 
on the half-plane $y\geq kx$ as follows:
\begin{align*}
\#\P\one_{n,n} (NE, EN) &= \sum_{m\ge0} \frac{1}{m+1} {2m \choose m}{n+m \choose 3m},\\
\#\P\two_{n,2n} (NE, EN) &= \sum_{m\ge0} \frac{ (-1)^{n-m}}{m+1} {3m+1 \choose m}{m+1 \choose n-m}.
\end{align*}

The remainder of this paper is organized as follows.
In Section~\ref{sec:delannoy},
we review previous studies and analyze the number of Delannoy paths without peaks and valleys.
In Section~\ref{sec:map},
we construct two bijections $\dtp$ and $\ptd$ 
between $\P\nm(NE, EN)$ and $\di{\aug{\P\nm}(D, EENN)}$.
In Section~\ref{sec:path},
we address Delannoy paths without peaks and valleys on the half-plane $y\geq kx$.
In Section~\ref{sec:without},
we enumerate the cardinalities of $\P\one_{n,n} (NE, EN)$ and $\P\two_{n,2n} (NE, EN)$.
We also present 
Conjecture~\ref{conj:1}, which is about 
Catalan paths avoiding symmetric peaks \cite{Eli21},
and
Conjecture~\ref{conj:2}, which is about 
inversion sequences avoiding the pattern $102$ \cite{MS15}.

%

\section{Delannoy Paths}
\label{sec:delannoy}

Let $n$ and $m$ be nonnegative integers.
Let $h(n,m)$ be the cardinality of $\P\nm(NE, EN)$.
Andrade et al. \cite{AANT14} mentioned that 
$h(n,m)$
satisfies the recurrence
\begin{align}
h(n,m) = h(n-1, m) + h(n, m-1) - h(n-2, m-2)
\label{eq:s_rec}
\end{align}
for $n \ge2$ and $m\ge 2$.
With initial conditions $h(n,0)=h(0,m)=1$, 
they deduced the generating function of $h(n,m)$ and 
found Formula \eqref{eq:s_gen} for $h(n,m)$.

Because their process of finding \eqref{eq:s_gen} was algebraic,
we want to find a combinatorial interpretation of $h(n,m)$.
It is natural to
consider the set $\P\nm(D, EENN)$ of Delannoy paths $P$ in $\P\nm$
without diagonal steps and deep valleys
as a combinatorial object satisfying the recurrence \eqref{eq:s_rec},
letting $b(n,m)$ be the cardinality of $\P\nm(D, EENN)$.
By considering the last pattern of the paths, $b(n,m)$ also satisfies
\begin{align}
b(n,m) =
\begin{cases}
b(n-1, m) + b(n, m-1) - b(n-2, m-2)& \text{if $(n,m) \neq (0,0)$,}\\
1 & \text{if $(n,m) = (0,0)$}.
\end{cases}
\label{eq:recb}
\end{align}
Conventionally, $b(n,m) = 0$ for $n<0$ or $m<0$.
Using the principle of inclusion and exclusion,
we obtain
\begin{align}
b(n,m) = \sum_{i\ge0} (-1)^i \binom{n+m-3i}{n -2i, m -2i, i},
\label{eq:b_gen}
\end{align}
where $i$ keeps track the number of deep valleys, which is not in the OEIS \cite{Slo18}.

Comparing \eqref{eq:s_gen} with \eqref{eq:b_gen}, we observe that
\begin{align}
h(n,m) = b(n,m) - b(n-1,m-1).
\label{eq:diff}
\end{align}


Now, we introduce another combinatorial object whose cardinality is the right-hand side of \eqref{eq:diff}.
Recall that $\di{\aug{\P\nm}(D, EENN)}$ is the set of Delannoy paths $P$ in $\P\nm$,
where the augmented lattice path $\aug{P}$ does not have a diagonal step $D$ or a deep valley $EENN$.
Let $a(n,m)$ be the cardinality of $\di{\aug{\P\nm}(D, EENN)}$.
We show a combinatorial proof of
\begin{align}
a(n,m) = b(n,m) - b(n-1,m-1).
\label{eq:tau}
\end{align}

Consider a mapping
$$\tau : \P\nm(D, EENN) \setminus \di{\aug{\P\nm}(D, EENN)} \to \P\n1m1(D, EENN)$$
with two rules:
\begin{align*}
\alpha EEN &\mapsto E \alpha, \\
ENN \beta &\mapsto N \beta.
\end{align*}
According to these rules, 
for a given $ENN \gamma EEN$ in $\P\nm(D, EENN) \setminus \di{\aug{\P\nm}(D, EENN)}$,
we have two candidates $EENN \gamma$ and $N \gamma EEN$ as $\tau(ENN \gamma EEN)$.
Because the first candidate $EENN \gamma$ includes a deep valley, it does not belong to $\P\n1m1(D, EENN)$.
As a result, $ENN \gamma EEN$ should correspond to $N \gamma EEN$ in $\P\n1m1(D, EENN)$.

In addition, the inverse mapping of $\tau$ is well-defined, and $\tau$ is bijective.
Therefore, we immediately obtain the equation
$$
b(n,m)-a(n,m)=b(n-1,m-1),
$$
which is equivalent to \eqref{eq:tau}.


\rmk
The generating function $F_B(x,y)$ of $b(n,m)$ is induced from \eqref{eq:recb} as follows:
\begin{align}
F_B(x,y) = \sum_{n, m\ge 0} b(n,m) x^n y^m = \frac{1}{1-x-y+x^2y^2}.
\label{eq:FB}
\end{align}
When the generating function of $a(n,m)$ is defined by
$$F_A(x,y) = \sum_{n, m\ge 0} a(n,m) x^n y^m,$$
Equation~\eqref{eq:tau} yields
$$F_B(x,y) - F_A(x,y) = xy F_B(x,y).$$
Thus, we have the generating function of $a(n,m)$
$$F_A(x,y) = \frac{1-xy}{1-x-y+x^2y^2},$$
which is also the generating function of $h(n,m)$.


\section{Two Mappings}
\label{sec:map}

A \emph{North-East lattice path} is a lattice path
in $\Z^2$ with steps in $\set{N, E}$.
Let $\P\nm(D)$ be the set of North-East lattice paths from the origin to $(n,m)$.
Let $\P\nm(NE)$ be the set of Delannoy paths without peaks in $\P\nm$.

Given $P \in \P\nm(D)$,
a mapping $\ptd$ changes each peak of $P$ to a diagonal step.
Evidently, 
$\pi$ is well-defined 
and
$\ptd(P) \in \P\nm(NE)$.
Considering a reverse of $\ptd$,
given $Q \in \P\nm(NE)$,
a mapping $\dtp$ changes each diagonal step of $Q$ to a peak.
Evidently, 
$\dtp$ is well-defined 
and 
$\dtp(Q) \in \P\nm(D)$.


\begin{figure}
$$
\centering
\begin{pgfpicture}{-8.00mm}{-8.00mm}{134.00mm}{50.00mm}
\pgfsetxvec{\pgfpoint{0.60mm}{0mm}}
\pgfsetyvec{\pgfpoint{0mm}{0.60mm}}
\color[rgb]{0,0,0}\pgfsetlinewidth{0.30mm}\pgfsetdash{}{0mm}
\pgfsetlinewidth{0.60mm}\pgfmoveto{\pgfxy(0.00,0.00)}\pgflineto{\pgfxy(10.00,0.00)}\pgflineto{\pgfxy(10.00,10.00)}\pgflineto{\pgfxy(10.00,20.00)}\pgflineto{\pgfxy(20.00,20.00)}\pgflineto{\pgfxy(30.00,20.00)}\pgflineto{\pgfxy(30.00,30.00)}\pgflineto{\pgfxy(30.00,40.00)}\pgflineto{\pgfxy(30.00,50.00)}\pgflineto{\pgfxy(40.00,50.00)}\pgflineto{\pgfxy(40.00,60.00)}\pgflineto{\pgfxy(50.00,60.00)}\pgflineto{\pgfxy(60.00,60.00)}\pgflineto{\pgfxy(60.00,70.00)}\pgfstroke
\pgfcircle[fill]{\pgfxy(0.00,0.00)}{0.60mm}
\pgfsetlinewidth{0.30mm}\pgfcircle[stroke]{\pgfxy(0.00,0.00)}{0.60mm}
\pgfcircle[fill]{\pgfxy(10.00,0.00)}{0.60mm}
\pgfcircle[stroke]{\pgfxy(10.00,0.00)}{0.60mm}
\pgfcircle[fill]{\pgfxy(10.00,10.00)}{0.60mm}
\pgfcircle[stroke]{\pgfxy(10.00,10.00)}{0.60mm}
\pgfcircle[fill]{\pgfxy(10.00,20.00)}{0.60mm}
\pgfcircle[stroke]{\pgfxy(10.00,20.00)}{0.60mm}
\pgfcircle[fill]{\pgfxy(20.00,20.00)}{0.60mm}
\pgfcircle[stroke]{\pgfxy(20.00,20.00)}{0.60mm}
\pgfcircle[fill]{\pgfxy(30.00,20.00)}{0.60mm}
\pgfcircle[stroke]{\pgfxy(30.00,20.00)}{0.60mm}
\pgfcircle[fill]{\pgfxy(30.00,30.00)}{0.60mm}
\pgfcircle[stroke]{\pgfxy(30.00,30.00)}{0.60mm}
\pgfcircle[fill]{\pgfxy(30.00,40.00)}{0.60mm}
\pgfcircle[stroke]{\pgfxy(30.00,40.00)}{0.60mm}
\pgfcircle[fill]{\pgfxy(30.00,50.00)}{0.60mm}
\pgfcircle[stroke]{\pgfxy(30.00,50.00)}{0.60mm}
\pgfcircle[fill]{\pgfxy(40.00,50.00)}{0.60mm}
\pgfcircle[stroke]{\pgfxy(40.00,50.00)}{0.60mm}
\pgfcircle[fill]{\pgfxy(40.00,60.00)}{0.60mm}
\pgfcircle[stroke]{\pgfxy(40.00,60.00)}{0.60mm}
\pgfcircle[fill]{\pgfxy(50.00,60.00)}{0.60mm}
\pgfcircle[stroke]{\pgfxy(50.00,60.00)}{0.60mm}
\pgfcircle[fill]{\pgfxy(60.00,60.00)}{0.60mm}
\pgfcircle[stroke]{\pgfxy(60.00,60.00)}{0.60mm}
\pgfcircle[fill]{\pgfxy(60.00,70.00)}{0.60mm}
\pgfcircle[stroke]{\pgfxy(60.00,70.00)}{0.60mm}
\pgfmoveto{\pgfxy(-10.00,0.00)}\pgflineto{\pgfxy(-10.00,0.00)}\pgfstroke
\pgfmoveto{\pgfxy(-10.00,0.00)}\pgflineto{\pgfxy(80.00,0.00)}\pgfstroke
\pgfmoveto{\pgfxy(80.00,0.00)}\pgflineto{\pgfxy(77.20,0.70)}\pgflineto{\pgfxy(80.00,0.00)}\pgflineto{\pgfxy(77.20,-0.70)}\pgflineto{\pgfxy(80.00,0.00)}\pgfclosepath\pgffill
\pgfmoveto{\pgfxy(80.00,0.00)}\pgflineto{\pgfxy(77.20,0.70)}\pgflineto{\pgfxy(80.00,0.00)}\pgflineto{\pgfxy(77.20,-0.70)}\pgflineto{\pgfxy(80.00,0.00)}\pgfclosepath\pgfstroke
\pgfmoveto{\pgfxy(0.00,-10.00)}\pgflineto{\pgfxy(0.00,80.00)}\pgfstroke
\pgfmoveto{\pgfxy(0.00,80.00)}\pgflineto{\pgfxy(-0.70,77.20)}\pgflineto{\pgfxy(0.00,80.00)}\pgflineto{\pgfxy(0.70,77.20)}\pgflineto{\pgfxy(0.00,80.00)}\pgfclosepath\pgffill
\pgfmoveto{\pgfxy(0.00,80.00)}\pgflineto{\pgfxy(-0.70,77.20)}\pgflineto{\pgfxy(0.00,80.00)}\pgflineto{\pgfxy(0.70,77.20)}\pgflineto{\pgfxy(0.00,80.00)}\pgfclosepath\pgfstroke
\pgfsetlinewidth{0.60mm}\pgfmoveto{\pgfxy(140.00,0.00)}\pgflineto{\pgfxy(150.00,0.00)}\pgflineto{\pgfxy(150.00,10.00)}\pgflineto{\pgfxy(155.00,15.00)}\pgflineto{\pgfxy(160.00,20.00)}\pgflineto{\pgfxy(170.00,20.00)}\pgflineto{\pgfxy(170.00,30.00)}\pgflineto{\pgfxy(170.00,40.00)}\pgflineto{\pgfxy(175.00,45.00)}\pgflineto{\pgfxy(180.00,50.00)}\pgflineto{\pgfxy(185.00,55.00)}\pgflineto{\pgfxy(190.00,60.00)}\pgflineto{\pgfxy(200.00,60.00)}\pgflineto{\pgfxy(200.00,70.00)}\pgfstroke
\pgfcircle[fill]{\pgfxy(140.00,0.00)}{0.60mm}
\pgfsetlinewidth{0.30mm}\pgfcircle[stroke]{\pgfxy(140.00,0.00)}{0.60mm}
\pgfcircle[fill]{\pgfxy(150.00,0.00)}{0.60mm}
\pgfcircle[stroke]{\pgfxy(150.00,0.00)}{0.60mm}
\pgfcircle[fill]{\pgfxy(150.00,10.00)}{0.60mm}
\pgfcircle[stroke]{\pgfxy(150.00,10.00)}{0.60mm}
\pgfcircle[fill]{\pgfxy(160.00,20.00)}{0.60mm}
\pgfcircle[stroke]{\pgfxy(160.00,20.00)}{0.60mm}
\pgfcircle[fill]{\pgfxy(170.00,20.00)}{0.60mm}
\pgfcircle[stroke]{\pgfxy(170.00,20.00)}{0.60mm}
\pgfcircle[fill]{\pgfxy(170.00,30.00)}{0.60mm}
\pgfcircle[stroke]{\pgfxy(170.00,30.00)}{0.60mm}
\pgfcircle[fill]{\pgfxy(170.00,40.00)}{0.60mm}
\pgfcircle[stroke]{\pgfxy(170.00,40.00)}{0.60mm}
\pgfcircle[fill]{\pgfxy(180.00,50.00)}{0.60mm}
\pgfcircle[stroke]{\pgfxy(180.00,50.00)}{0.60mm}
\pgfcircle[fill]{\pgfxy(190.00,60.00)}{0.60mm}
\pgfcircle[stroke]{\pgfxy(190.00,60.00)}{0.60mm}
\pgfcircle[fill]{\pgfxy(200.00,60.00)}{0.60mm}
\pgfcircle[stroke]{\pgfxy(200.00,60.00)}{0.60mm}
\pgfcircle[fill]{\pgfxy(200.00,70.00)}{0.60mm}
\pgfcircle[stroke]{\pgfxy(200.00,70.00)}{0.60mm}
\pgfmoveto{\pgfxy(130.00,0.00)}\pgflineto{\pgfxy(130.00,0.00)}\pgfstroke
\pgfmoveto{\pgfxy(130.00,0.00)}\pgflineto{\pgfxy(220.00,0.00)}\pgfstroke
\pgfmoveto{\pgfxy(220.00,0.00)}\pgflineto{\pgfxy(217.20,0.70)}\pgflineto{\pgfxy(220.00,0.00)}\pgflineto{\pgfxy(217.20,-0.70)}\pgflineto{\pgfxy(220.00,0.00)}\pgfclosepath\pgffill
\pgfmoveto{\pgfxy(220.00,0.00)}\pgflineto{\pgfxy(217.20,0.70)}\pgflineto{\pgfxy(220.00,0.00)}\pgflineto{\pgfxy(217.20,-0.70)}\pgflineto{\pgfxy(220.00,0.00)}\pgfclosepath\pgfstroke
\pgfmoveto{\pgfxy(140.00,-10.00)}\pgflineto{\pgfxy(140.00,80.00)}\pgfstroke
\pgfmoveto{\pgfxy(140.00,80.00)}\pgflineto{\pgfxy(139.30,77.20)}\pgflineto{\pgfxy(140.00,80.00)}\pgflineto{\pgfxy(140.70,77.20)}\pgflineto{\pgfxy(140.00,80.00)}\pgfclosepath\pgffill
\pgfmoveto{\pgfxy(140.00,80.00)}\pgflineto{\pgfxy(139.30,77.20)}\pgflineto{\pgfxy(140.00,80.00)}\pgflineto{\pgfxy(140.70,77.20)}\pgflineto{\pgfxy(140.00,80.00)}\pgfclosepath\pgfstroke
\pgfsetlinewidth{0.60mm}\pgfmoveto{\pgfxy(90.00,40.00)}\pgflineto{\pgfxy(120.00,40.00)}\pgfstroke
\pgfmoveto{\pgfxy(120.00,40.00)}\pgflineto{\pgfxy(117.20,40.70)}\pgflineto{\pgfxy(120.00,40.00)}\pgflineto{\pgfxy(117.20,39.30)}\pgflineto{\pgfxy(120.00,40.00)}\pgfclosepath\pgffill
\pgfmoveto{\pgfxy(120.00,40.00)}\pgflineto{\pgfxy(117.20,40.70)}\pgflineto{\pgfxy(120.00,40.00)}\pgflineto{\pgfxy(117.20,39.30)}\pgflineto{\pgfxy(120.00,40.00)}\pgfclosepath\pgfstroke
\pgfmoveto{\pgfxy(120.00,35.00)}\pgflineto{\pgfxy(90.00,35.00)}\pgfstroke
\pgfmoveto{\pgfxy(90.00,35.00)}\pgflineto{\pgfxy(92.80,34.30)}\pgflineto{\pgfxy(90.00,35.00)}\pgflineto{\pgfxy(92.80,35.70)}\pgflineto{\pgfxy(90.00,35.00)}\pgfclosepath\pgffill
\pgfmoveto{\pgfxy(90.00,35.00)}\pgflineto{\pgfxy(92.80,34.30)}\pgflineto{\pgfxy(90.00,35.00)}\pgflineto{\pgfxy(92.80,35.70)}\pgflineto{\pgfxy(90.00,35.00)}\pgfclosepath\pgfstroke
\pgfputat{\pgfxy(105.00,45.00)}{\pgfbox[bottom,left]{\fontsize{6.83}{8.19}\selectfont \makebox[0pt]{$\ptd$}}}
\pgfputat{\pgfxy(105.00,28.00)}{\pgfbox[bottom,left]{\fontsize{6.83}{8.19}\selectfont \makebox[0pt]{$\dtp$}}}
\pgfsetdash{{0.30mm}{0.50mm}}{0mm}\pgfsetlinewidth{0.30mm}\pgfmoveto{\pgfxy(10.00,10.00)}\pgflineto{\pgfxy(20.00,20.00)}\pgfstroke
\pgfmoveto{\pgfxy(30.00,40.00)}\pgflineto{\pgfxy(40.00,50.00)}\pgfstroke
\pgfmoveto{\pgfxy(40.00,50.00)}\pgflineto{\pgfxy(50.00,60.00)}\pgfstroke
\pgfmoveto{\pgfxy(190.00,60.00)}\pgflineto{\pgfxy(180.00,60.00)}\pgflineto{\pgfxy(180.00,50.00)}\pgfstroke
\pgfmoveto{\pgfxy(180.00,50.00)}\pgflineto{\pgfxy(170.00,50.00)}\pgflineto{\pgfxy(170.00,40.00)}\pgfstroke
\pgfmoveto{\pgfxy(160.00,20.00)}\pgflineto{\pgfxy(150.00,20.00)}\pgflineto{\pgfxy(150.00,10.00)}\pgfstroke
\end{pgfpicture}%
$$
\caption{Example under mappings $\ptd$ and $\dtp$}
\label{fig:map}
\end{figure}

For example, as shown in Figure~\ref{fig:map},
we have
\begin{align*}
\ptd(ENNEENNNENEEN)&=ENDENNDDEN,\\
\dtp(ENDENNDDEN)&=ENNEENNNENEEN.
\end{align*}

\begin{thm}
Mappings $\ptd$ and $\dtp$ are bijections between $\P\nm(NE)$ and $\P\nm(D)$.
$$
\begin{tikzcd}
\P\nm&
\P\nm\\
\P\nm(D) \arrow[u, no head, "\cup"] \arrow[r, shift left=.7, "\ptd"] &
\P\nm(NE) \arrow[u, no head, "\cup"] \arrow[l, shift left=.7, "\dtp"] \\
\end{tikzcd}
$$
\end{thm}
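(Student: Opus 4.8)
The plan is to take for granted the facts already verified in the text—that $\ptd$ and $\dtp$ are well defined, that $\ptd(P)\in\P\nm(NE)$ for $P\in\P\nm(D)$, and that $\dtp(Q)\in\P\nm(D)$ for $Q\in\P\nm(NE)$, each preserving the endpoint because substituting the single letter $D$ for the factor $NE$ (or vice versa) leaves the total displacement unchanged—and to reduce the theorem to the single remaining claim that the two maps are mutually inverse, i.e. $\dtp\circ\ptd=\mathrm{id}$ on $\P\nm(D)$ and $\ptd\circ\dtp=\mathrm{id}$ on $\P\nm(NE)$. First I would record the elementary but essential observation that any two occurrences of the factor $NE$ in a word over $\set{N,E}$ are position-disjoint: an occurrence at position $i$ forces the $(i+1)$-st letter to be $E$, while an occurrence at $i+1$ would force that same letter to be $N$. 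Hence the set of peaks of a North-East path is unambiguous, and $\ptd$ may replace all of them simultaneously with no greediness convention needed.

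The heart of the argument is a purely local analysis at the boundaries of each substitution, showing that neither map manufactures a spurious pattern. For $\dtp$ I would argue that when a letter $D$ is expanded to $NE$ inside $Q\in\P\nm(NE)$, the only factors $NE$ created are the inserted ones: the newly inserted $E$ is immediately followed by whatever letter followed $D$ in $Q$, and since a peak must begin with $N$ while $E\neq N$, no peak starts at the inserted $E$; symmetrically, the inserted $N$ is preceded by whatever letter preceded $D$, and that letter followed by $N$ produces one of $NN$, $EN$, or $DN$, none of which is a peak. Because $Q$ itself contains no factor $NE$, no peak survives among the original letters either. Thus the peaks of $\dtp(Q)$ are in canonical bijection with the letters $D$ of $Q$. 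The mirror statement for $\ptd$ is that after every peak $NE$ of $P\in\P\nm(D)$ is collapsed to $D$, the result is peak-free and its letters $D$ are exactly the collapsed peaks: a letter $N$ that survives the collapse was not followed by $E$ in $P$, hence in $\ptd(P)$ it is followed either by another surviving $N$ or by a $D$ arising from a collapsed peak, never by $E$.

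With these two facts the mutual inverse property is immediate. Given $Q\in\P\nm(NE)$, the map $\dtp$ produces a North-East path whose peaks are exactly the images of the letters $D$ of $Q$; applying $\ptd$ then collapses precisely those peaks and fixes every other letter, returning $Q$. Dually, given $P\in\P\nm(D)$, the letters $D$ of $\ptd(P)$ are exactly the collapsed peaks, and $\dtp$ expands precisely those back to $NE$ while fixing the remaining $N$'s and $E$'s, returning $P$. I expect the main obstacle to be exactly the boundary bookkeeping of the previous paragraph—the verification that substitution creates no accidental $NE$ and destroys no genuine one—since everything else is formal; the cleanest way to make it rigorous is probably to track the correspondence ``letter $D$ $\leftrightarrow$ peak $NE$'' explicitly and to check the finitely many two-letter windows straddling a substitution site, rather than to reason about the paths globally.
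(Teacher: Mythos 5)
Your proposal is correct and follows the same route as the paper: the paper's entire proof is the one-line assertion that $\dtp(\ptd(P))=P$ for $P \in \P\nm(D)$ and $\ptd(\dtp(Q))=Q$ for $Q \in \P\nm(NE)$, with well-definedness and the codomain claims declared evident beforehand. You simply supply the local boundary analysis (disjointness of peaks, and the correspondence between letters $D$ and inserted/collapsed factors $NE$) that the paper leaves implicit.
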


\begin{proof}
$\dtp(\ptd(P))=P$ holds for $P \in \P\nm(D)$ and
$\ptd(\dtp(Q))=Q$ for $Q \in \P\nm(NE)$.
\end{proof}

From the previous section, the two sets $\di{\aug{\P\nm}(D, EENN)}$ and $\P\nm(NE, EN)$ are known to be equinumerous, that is,
\begin{align}
a(n,m)=h(n,m).
\label{eq:as}
\end{align}
Here, we provide a bijective proof of \eqref{eq:as} under $\ptd$ and $\dtp$.

\begin{thm}
\label{thm:main}
Mappings $\ptd$ and $\dtp$  
between $\di{\aug{\P\nm}(D, EENN)}$ and $\P\nm(NE, EN)$
are bijections.
$$
\begin{tikzcd}
\P\nm(D) \arrow[r, shift left=.7, "\ptd"] & 
\P\nm(NE) \arrow[l, shift left=.7, "\dtp"] \\ 
\di{\aug{\P\nm}(D, EENN)} \arrow[u, no head, "\cup"] \arrow[r, shift left=.7, "\ptd"] &
\P\nm(NE, EN) \arrow[u, no head, "\cup"] \arrow[l, shift left=.7, "\dtp" ]
\end{tikzcd}
$$
\end{thm}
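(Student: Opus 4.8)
The plan is to piggyback on the preceding theorem and reduce the whole statement to a single pattern-matching equivalence. Since that theorem already gives mutually inverse bijections $\ptd\colon\P\nm(D)\to\P\nm(NE)$ and $\dtp\colon\P\nm(NE)\to\P\nm(D)$, and since $\di{\aug{\P\nm}(D,EENN)}\subseteq\P\nm(D)$ while $\P\nm(NE,EN)\subseteq\P\nm(NE)$, it is enough to prove that $\ptd$ carries the first subset exactly onto the second; the corresponding statement for $\dtp$ then follows for free. First I would unwind the two subsets. Because augmentation inserts only $E$ and $N$ steps, $\aug{P}$ avoids $D$ if and only if $P$ has no diagonal step, so $\di{\aug{\P\nm}(D,EENN)}=\set{P\in\P\nm(D):\aug{P}\text{ avoids }EENN}$; likewise $\P\nm(NE,EN)=\set{Q\in\P\nm(NE):Q\text{ avoids }EN}$ since $\ptd(P)$ automatically avoids $NE$. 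Thus the theorem reduces to the equivalence
$$\aug{P}\text{ avoids }EENN \iff \ptd(P)\text{ avoids }EN,\qquad P\in\P\nm(D).$$

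The core is a block-decomposition bookkeeping. I would write $\aug{P}$, which begins with $E$ and ends with $N$, as $\aug{P}=E^{c_0}N^{d_1}E^{c_1}N^{d_2}\cdots E^{c_{r-1}}N^{d_r}$ with all $c_i,d_j\ge 1$, so that a deep valley $EENN$ sits at the $E$-to-$N$ junction indexed by $i$ precisely when $c_i\ge 2$ and $d_{i+1}\ge 2$. Removing the augmenting steps gives $P=E^{c_0-1}N^{d_1}E^{c_1}\cdots N^{d_{r-1}}E^{c_{r-1}}N^{d_r-1}$, whose peaks are exactly the $r-1$ interior junctions $N^{d_j}E^{c_j}$. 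Applying $\ptd$ collapses each such peak into one $D$, producing
$$\ptd(P)=E^{c_0-1}N^{d_1-1}D\,E^{c_1-1}N^{d_2-1}D\cdots D\,E^{c_{r-1}-1}N^{d_r-1}.$$
Since $\ptd$ inserts a $D$ only at peaks, no $D$ appears at an $E$-to-$N$ junction, so the only places where an $E$ can be immediately followed by an $N$ are the junctions $E^{c_i-1}N^{d_{i+1}-1}$; such a valley is present exactly when $c_i\ge 2$ and $d_{i+1}\ge 2$. Hence the deep valleys of $\aug{P}$ and the valleys of $\ptd(P)$ are indexed by the very same junctions, which yields the displayed equivalence (and, on summing, the sharper statement that the two statistics coincide).

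The point that needs the most care is the boundary behavior, and this is exactly where the augmentation pays off. The first $E$-block and the last $N$-block of $P$ are never consumed by a peak under $\ptd$, since they have no neighbor on their outer side; a count using $P$ alone would therefore mishandle the two end junctions. Prepending an $E$ and appending an $N$ lengthens precisely these boundary blocks so that the extreme junctions obey the same ``$c_i,d_{i+1}\ge 2$'' criterion as the interior ones. In writing this up I would be explicit about empty blocks (when some $c_i-1$ or $d_j-1$ vanishes) and verify that the resulting merging of adjacent $D$'s neither creates nor destroys a valley, and I would check the minimal case $P=EN$, where $\aug{P}=EENN$ and $\ptd(P)=EN$, to confirm the accounting at the boundary. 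Once the equivalence is established, combining it with the bijectivity of $\ptd$ and $\dtp$ from the preceding theorem shows at once that they restrict to mutually inverse bijections between $\di{\aug{\P\nm}(D,EENN)}$ and $\P\nm(NE,EN)$.
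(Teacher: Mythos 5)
Your proposal is correct, but it takes a genuinely different route from the paper. The paper proves the two inclusions separately: for $P \in \di{\aug{\P\nm}(D, EENN)}$ it argues locally that any valley $EN$ of $P$ must be immediately preceded by $N$ or immediately followed by $E$ (otherwise $\aug{P}$ would contain $EENN$), so that valley is absorbed into an adjacent peak and becomes $DN$ or $ED$ under $\ptd$; for the converse it uses a short contradiction argument based on the identity $\di{\ptd(\aug{\dtp(Q)})}=Q$, showing that a deep valley in $\aug{\dtp(Q)}$ would force a valley in $Q$. You instead establish a single equivalence, $\aug{P}$ avoids $EENN$ if and only if $\ptd(P)$ avoids $EN$, by decomposing $\aug{P}$ into maximal runs $E^{c_0}N^{d_1}\cdots E^{c_{r-1}}N^{d_r}$, computing $\ptd(P)$ explicitly in these coordinates, and observing that both a deep valley of $\aug{P}$ and a valley of $\ptd(P)$ occur at the $i$-th junction precisely when $c_i\ge 2$ and $d_{i+1}\ge 2$. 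Your reduction is sound: since the preceding theorem makes $\ptd$ and $\dtp$ mutually inverse on $\P\nm(D)$ and $\P\nm(NE)$, the "iff" simultaneously gives that $\ptd$ maps into and onto $\P\nm(NE,EN)$, and the statement for $\dtp$ follows. What your approach buys is a refinement the paper does not state: the deep valleys of $\aug{P}$ and the valleys of $\ptd(P)$ are matched junction-by-junction, so the two statistics are equidistributed, not merely simultaneously zero; you also correctly isolate the role of augmentation as normalizing the boundary blocks so the extreme junctions satisfy the same criterion as interior ones. What the paper's route buys is brevity: no run bookkeeping or empty-block case analysis is needed, at the cost of proving only the avoidance statement. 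The edge cases you flag (empty blocks producing adjacent $D$'s, and $P=EN$) are genuine but straightforward, since $D$ steps can never participate in a valley.
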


\begin{proof}
It is sufficient to show that
\begin{align*}
P \in \di{\aug{\P\nm}(D, EENN)} &\Longrightarrow \ptd(P) \in \P\nm(NE, EN) \\
Q \in \P\nm(NE, EN) &\Longrightarrow \dtp(Q) \in \di{\aug{\P\nm}(D, EENN)}.
\end{align*}

\begin{enumerate}[(a)]
\item
For a given $P \in \di{\aug{\P\nm}(D, EENN)}$,
we have $P \in \P\nm(D)$ and $\ptd(P) \in \P\nm(NE)$.
Thus, $\ptd(P)$ avoids peaks $NE$.

%

Because $P \in \di{\aug{\P\nm}(D, EENN)}$ and $\aug{P}$ do not include a deep valley $EENN$,
only two possibilities exist for each valley $EN$ in $P$:
\begin{enumerate}[(i)]
\item If a valley $EN$ follows $N$ in $\aug{P}$, then $NEN$ should be changed to $DN$.
\item If a valley $EN$ preceds $E$ in $\aug{P}$, then $ENE$ should be changed to $ED$.
\end{enumerate}
Hence, $\ptd(P)$ avoids peaks $EN$, and we have $\ptd(P) \in \P\nm(NE, EN)$.

\item
For a given $Q \in \P\nm(NE, EN)$,
we have $Q \in \P\nm(NE)$ and $\dtp(Q) \in \P\nm(D)$.
Thus, $\aug{\dtp(Q)}$ avoids diagonal steps $D$.


Suppose $\aug{\dtp(Q)}$ includes a deep valley $EENN$.
Then, the path $\di{\ptd(\aug{\dtp(Q)})}$ includes a valley $EN$.
Because $$\di{\ptd(\aug{\dtp(Q)})}=Q,$$
$Q$ includes a valley $EN$,
which contradicts $Q \in \P\nm(NE, EN)$.
Hence, $\aug{\dtp(Q)}$ avoids deep valleys $EENN$, and we have $\dtp(Q) \in \di{\aug{\P\nm}(D, EENN)}$.
\end{enumerate}
\end{proof}

We complete a combinatorial proof of \eqref{eq:s_gen}
because \eqref{eq:b_gen} is proved by the principle of inclusion and exclusion,
\eqref{eq:tau} is proved bijectively by $\tau$,
and \eqref{eq:as} is proved bijectively by $\ptd$ and $\dtp$.

\section{$k$-Schr\"oder paths}
\label{sec:path}

Let $k$ be a positive integer.
Define a \emph{region} $\Z^2(k)$ by
$$\Z^2(k)=\set{(x,y) \in \Z^2 : y \ge k x}.$$

Let $\P\nm\k$ be the set of Delannoy paths $P \in \P\nm$, whose vertices lie on the region $\Z^2(k)$.
Song \cite{Son05} introduced a \emph{$k$-Schr\"oder path} $P$ of size $n$
if $P \in \P\nkn\k$ for nonnegative integers $n$,
where the cardinality of $\P\nkn\k$ equals 
$$
\#\P\nkn\k
= \sum_{d=0}^n \frac{1}{kn-d+1}\binom{kn+n-d}{kn-d,n-d,d}
= \frac{1}{n}\sum_{j=1}^{n} \binom{kn}{j-1} \binom{n}{j} 2^j.
$$
Recently, Yang and Jiang \cite{YJ21} showed that the cardinality of $\P\nm\k$ equals 
$$
\#\P_{n,m}\k
= \sum_{d=0}^n \frac{m-kn+1}{m-d+1}\binom{m+n-d}{m-d,n-d,d}
= \frac{m-kn+1}{n}\sum_{j=1}^{n} \binom{m}{j-1} \binom{n}{j} 2^{j},
$$
with $\#\P_{n,m}\k = 0 $ if $m < kn$.





Let $\di{\aug{\P\nm\k}(D, EENN)}$ be the set of North-East lattice paths $P$ in $\P\nm\k$,
where the augmented lattice path $\aug{P}$ does not have a deep valley $EENN$,
and let $\P\nm\k(NE, EN)$ be the set of Delannoy paths without peaks and valleys in $\P\nm\k$.

\begin{cor}
\label{cor:main}
Mappings $\ptd$ from $\di{\aug{\P\k\nm}(D, EENN)}$ to $\P\k\nm(NE, EN)$
and
$\dtp$ from \linebreak $\P\k\nm(NE, EN)$ to $\di{\aug{\P\k\nm}(D, EENN)}$
are bijections.
$$
\begin{tikzcd}
\di{\aug{\P\nm}(D, EENN)} \arrow[r, shift left=.7, "\ptd"] & 
\P\nm(NE, EN) \arrow[l, shift left=.7, "\dtp" ] \\ 
\di{\aug{\P\k\nm}(D, EENN)} \arrow[u, no head, "\cup"] \arrow[r, shift left=.7, "\ptd"] &
\P\k\nm(NE,EN) \arrow[u, no head, "\cup"] \arrow[l, shift left=.7, "\dtp" ]
\end{tikzcd}
$$
\end{cor}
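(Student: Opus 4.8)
The plan is to bootstrap from Theorem~\ref{thm:main}, which already provides mutually inverse bijections $\ptd$ and $\dtp$ between $\di{\aug{\P\nm}(D, EENN)}$ and $\P\nm(NE, EN)$. Since the region-confined sets $\di{\aug{\P\k\nm}(D, EENN)}$ and $\P\k\nm(NE, EN)$ are exactly the subsets of these whose vertices all lie in $\Z^2(k)$, the pattern-avoidance conditions are already taken care of, and it suffices to show that the bijections of Theorem~\ref{thm:main} respect membership in the region. Concretely, I would prove that for any $P$ and its image $Q = \ptd(P)$, the path $P$ lies in $\Z^2(k)$ if and only if $Q$ does; this equivalence restricts the bijection along the vertical inclusions in the diagram and finishes the proof.

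First I would record how the two maps act on vertex sets. The map $\ptd$ turns each peak $NE$ into a diagonal $D$: a subpath from $(x,y)$ north to $(x, y+1)$ and east to $(x+1, y+1)$ collapses to a single step from $(x,y)$ to $(x+1, y+1)$, deleting only the corner vertex $(x, y+1)$. Dually, $\dtp$ turns each diagonal $D$ from $(x,y)$ to $(x+1,y+1)$ back into the peak $NE$, reinserting that same corner $(x, y+1)$. Thus passing from $P$ to $\ptd(P)$ only deletes peak-corners, while passing from $Q$ to $\dtp(Q)$ only inserts them, and every other vertex is left untouched.

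The forward implication is then immediate: deleting vertices cannot take a path out of the region, so if $P \in \di{\aug{\P\k\nm}(D, EENN)}$ has all its vertices in $\Z^2(k)$, then $Q = \ptd(P)$ does as well. For the converse I would invoke the single geometric fact that makes the statement work, namely that every reinserted corner lies directly above an existing vertex. Each corner $(x, y+1)$ that $\dtp$ inserts sits above a vertex $(x,y)$ of $Q$, and if $y \ge kx$ then trivially $y + 1 \ge kx$, so $(x, y+1) \in \Z^2(k)$ whenever $(x,y) \in \Z^2(k)$. Hence $P = \dtp(Q)$ stays in the region whenever $Q$ does, and the equivalence is complete.

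I expect the only delicate point, worth isolating as the crux, to be the orientation of these corners relative to the half-plane $y \ge kx$. Because peaks step north before east, the reinserted vertex is the \emph{upper-left} corner $(x, y+1)$, which lies on the correct side of the line $y = kx$; had the maps instead produced lower-right corners (as valleys $EN$ would), or had the region been $y \le kx$, the inserted vertex could violate the constraint and the restriction would break down. Once this orientation is verified, the bijection of Theorem~\ref{thm:main} descends verbatim to the region-confined subsets, proving the corollary.
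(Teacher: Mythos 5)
Your proposal is correct and takes essentially the same route as the paper: both restrict the bijections of Theorem~\ref{thm:main} to the region-confined subsets and verify region-preservation via the key geometric fact that the corner $(x,y+1)$ inserted or deleted when exchanging a diagonal step with a peak stays in $\Z^2(k)$. Your observation that only the lower endpoint $(x,y)\in\Z^2(k)$ is needed (since $y+1\ge kx$ follows trivially) is a slight sharpening of the paper's formulation, which assumes both endpoints of the diagonal lie in the region, but the argument is the same.
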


\begin{proof}
Because, for any lattice point $(x,y)$,
if $(x,y)\in \Z^2(k)$ and $(x+1, y+1)\in \Z^2(k)$ then $(x, y+1) \in \Z^2(k)$,
we know that
\begin{align*}
P \in \di{\aug{\P\nm\k}(D, EENN)} &\Longrightarrow \ptd(P) \in \P\nm\k(NE, EN) \\
Q \in \P\nm\k(NE, EN) &\Longrightarrow \dtp(Q) \in \di{\aug{\P\nm\k}(D, EENN)},
\end{align*}
which completes the proof.
\end{proof}

%
%


\section{$k$-Schr\"oder paths without peaks and valleys}
\label{sec:without}

Because every $k$-Schr\"oder path should begin with $N$ and end with $E$,
for a $k$-Schr\"oder path $P$ without deep valleys, $\aug{P}$ does not have deep valleys.
Thus, we have
$$\P\k\nkn(D,EENN) = \di{\aug{\P\k\nkn}(D,EENN)}.$$
From Corollary~\ref{cor:main}, we obtain
$$\# \P\k\nkn(D,EENN) = \# \P\k\nkn(NE,EN).$$



Let us divide $\P\k\nkn(NE,EN)$ by the last step.
Define $D\P\k\nkn(NE,EN)$ as the set of $P \in \P\k\nkn(NE,EN)$ ending with diagonal step $D$.
In addition, define $E\P\k\nkn(NE,EN)$ as the set of $P \in \P\k\nkn(NE,EN)$ ending with east step $E$.
By definition, it holds that
$$\# \P\k\nkn(NE,EN) = \delta_{n,0} + \# D\P\k\nkn(NE,EN) + \# E\P\k\nkn(NE,EN).$$
Define three generating functions $F$, $F_D$, and $F_E$ as
\begin{align*}
F &=F\k(x) = \sum_{n\ge0} \#{\P\k\nkn(NE,EN)} x^n, \\
F_D &=F_D\k(x) = \sum_{n\ge1} \# D\P\k\nkn(NE,EN) x^n, \\
F_E &=F_E\k(x) = \sum_{n\ge1} \# E\P\k\nkn(NE,EN) x^n.
\end{align*}
These satisfy the identity
\begin{align*}
F &= 1 + F_D + F_E. 
\end{align*}


\begin{figure}
$$
\centering
\begin{pgfpicture}{-6.67mm}{-6.50mm}{106.25mm}{114.50mm}
\pgfsetxvec{\pgfpoint{0.45mm}{0mm}}
\pgfsetyvec{\pgfpoint{0mm}{0.45mm}}
\color[rgb]{0,0,0}\pgfsetlinewidth{0.30mm}\pgfsetdash{}{0mm}
\pgfmoveto{\pgfxy(80.00,250.00)}\pgflineto{\pgfxy(80.00,250.00)}\pgfstroke
\pgfmoveto{\pgfxy(-10.00,0.00)}\pgflineto{\pgfxy(90.00,0.00)}\pgfstroke
\pgfmoveto{\pgfxy(90.00,0.00)}\pgflineto{\pgfxy(87.20,0.70)}\pgflineto{\pgfxy(90.00,0.00)}\pgflineto{\pgfxy(87.20,-0.70)}\pgflineto{\pgfxy(90.00,0.00)}\pgfclosepath\pgffill
\pgfmoveto{\pgfxy(90.00,0.00)}\pgflineto{\pgfxy(87.20,0.70)}\pgflineto{\pgfxy(90.00,0.00)}\pgflineto{\pgfxy(87.20,-0.70)}\pgflineto{\pgfxy(90.00,0.00)}\pgfclosepath\pgfstroke
\pgfmoveto{\pgfxy(0.00,-10.00)}\pgflineto{\pgfxy(0.00,250.00)}\pgfstroke
\pgfmoveto{\pgfxy(0.00,250.00)}\pgflineto{\pgfxy(-0.70,247.20)}\pgflineto{\pgfxy(0.00,250.00)}\pgflineto{\pgfxy(0.70,247.20)}\pgflineto{\pgfxy(0.00,250.00)}\pgfclosepath\pgffill
\pgfmoveto{\pgfxy(0.00,250.00)}\pgflineto{\pgfxy(-0.70,247.20)}\pgflineto{\pgfxy(0.00,250.00)}\pgflineto{\pgfxy(0.70,247.20)}\pgflineto{\pgfxy(0.00,250.00)}\pgfclosepath\pgfstroke
\pgfmoveto{\pgfxy(89.00,243.00)}\pgflineto{\pgfxy(89.00,243.00)}\pgfstroke
\pgfmoveto{\pgfxy(89.00,243.00)}\pgflineto{\pgfxy(89.00,243.00)}\pgfstroke
\pgfmoveto{\pgfxy(83.00,249.00)}\pgflineto{\pgfxy(-3.00,-9.00)}\pgfstroke
\pgfsetdash{{0.15mm}{0.50mm}}{0mm}\pgfsetlinewidth{0.15mm}\pgfmoveto{\pgfxy(76.61,239.98)}\pgflineto{\pgfxy(20.00,70.00)}\pgfstroke
\pgfmoveto{\pgfxy(73.25,239.98)}\pgflineto{\pgfxy(40.00,140.00)}\pgfstroke
\pgfsetdash{}{0mm}\pgfsetlinewidth{0.60mm}\pgfmoveto{\pgfxy(80.00,240.00)}\pgflineto{\pgfxy(70.00,230.00)}\pgfstroke
\pgfmoveto{\pgfxy(40.00,140.00)}\pgflineto{\pgfxy(40.00,130.00)}\pgfstroke
\pgfmoveto{\pgfxy(20.00,70.00)}\pgflineto{\pgfxy(20.00,60.00)}\pgfstroke
\pgfmoveto{\pgfxy(40.00,130.00)}\pgflineto{\pgfxy(30.00,120.00)}\pgfstroke
\pgfmoveto{\pgfxy(20.00,60.00)}\pgflineto{\pgfxy(10.00,50.00)}\pgfstroke
\pgfsetdash{{2.00mm}{1.00mm}}{0mm}\pgfsetlinewidth{0.15mm}\pgfmoveto{\pgfxy(40.00,130.00)}\pgfcurveto{\pgfxy(36.83,131.48)}{\pgfxy(33.17,131.48)}{\pgfxy(30.00,130.00)}\pgfcurveto{\pgfxy(22.81,126.64)}{\pgfxy(20.81,118.06)}{\pgfxy(20.00,110.00)}\pgfcurveto{\pgfxy(18.66,96.70)}{\pgfxy(18.66,83.30)}{\pgfxy(20.00,70.00)}\pgfstroke
\pgfmoveto{\pgfxy(20.00,60.00)}\pgfcurveto{\pgfxy(16.83,61.48)}{\pgfxy(13.17,61.48)}{\pgfxy(10.00,60.00)}\pgfcurveto{\pgfxy(2.81,56.64)}{\pgfxy(0.81,48.06)}{\pgfxy(0.00,40.00)}\pgfcurveto{\pgfxy(-1.34,26.70)}{\pgfxy(-1.34,13.30)}{\pgfxy(0.00,0.00)}\pgfstroke
\pgfmoveto{\pgfxy(70.00,230.00)}\pgfcurveto{\pgfxy(66.73,230.92)}{\pgfxy(63.27,230.92)}{\pgfxy(60.00,230.00)}\pgfcurveto{\pgfxy(44.45,225.60)}{\pgfxy(40.57,207.10)}{\pgfxy(40.00,190.00)}\pgfcurveto{\pgfxy(39.45,173.34)}{\pgfxy(39.45,156.66)}{\pgfxy(40.00,140.00)}\pgfstroke
\pgfputat{\pgfxy(39.00,199.00)}{\pgfbox[bottom,left]{\rotatebox{0.00}{\fontsize{5.12}{6.15}\selectfont \smash{\makebox[0pt][r]{$F$}}}}}
\pgfputat{\pgfxy(19.00,109.00)}{\pgfbox[bottom,left]{\rotatebox{0.00}{\fontsize{5.12}{6.15}\selectfont \smash{\makebox[0pt][r]{$1+F_D$}}}}}
\pgfputat{\pgfxy(-1.00,39.00)}{\pgfbox[bottom,left]{\rotatebox{0.00}{\fontsize{5.12}{6.15}\selectfont \smash{\makebox[0pt][r]{$1+F_D$}}}}}
\pgfputat{\pgfxy(75.00,241.00)}{\pgfbox[bottom,left]{\rotatebox{0.00}{\fontsize{5.12}{6.15}\selectfont \smash{\makebox[0pt]{$x$}}}}}
\pgfsetdash{}{0mm}\pgfsetlinewidth{0.30mm}\pgfmoveto{\pgfxy(209.00,243.00)}\pgflineto{\pgfxy(209.00,243.00)}\pgfstroke
\pgfmoveto{\pgfxy(209.00,243.00)}\pgflineto{\pgfxy(209.00,243.00)}\pgfstroke
\pgfmoveto{\pgfxy(220.00,250.00)}\pgflineto{\pgfxy(220.00,250.00)}\pgfstroke
\pgfmoveto{\pgfxy(130.00,0.00)}\pgflineto{\pgfxy(230.00,0.00)}\pgfstroke
\pgfmoveto{\pgfxy(230.00,0.00)}\pgflineto{\pgfxy(227.20,0.70)}\pgflineto{\pgfxy(230.00,0.00)}\pgflineto{\pgfxy(227.20,-0.70)}\pgflineto{\pgfxy(230.00,0.00)}\pgfclosepath\pgffill
\pgfmoveto{\pgfxy(230.00,0.00)}\pgflineto{\pgfxy(227.20,0.70)}\pgflineto{\pgfxy(230.00,0.00)}\pgflineto{\pgfxy(227.20,-0.70)}\pgflineto{\pgfxy(230.00,0.00)}\pgfclosepath\pgfstroke
\pgfmoveto{\pgfxy(140.00,-10.00)}\pgflineto{\pgfxy(140.00,250.00)}\pgfstroke
\pgfmoveto{\pgfxy(140.00,250.00)}\pgflineto{\pgfxy(139.30,247.20)}\pgflineto{\pgfxy(140.00,250.00)}\pgflineto{\pgfxy(140.70,247.20)}\pgflineto{\pgfxy(140.00,250.00)}\pgfclosepath\pgffill
\pgfmoveto{\pgfxy(140.00,250.00)}\pgflineto{\pgfxy(139.30,247.20)}\pgflineto{\pgfxy(140.00,250.00)}\pgflineto{\pgfxy(140.70,247.20)}\pgflineto{\pgfxy(140.00,250.00)}\pgfclosepath\pgfstroke
\pgfmoveto{\pgfxy(229.00,243.00)}\pgflineto{\pgfxy(229.00,243.00)}\pgfstroke
\pgfmoveto{\pgfxy(229.00,243.00)}\pgflineto{\pgfxy(229.00,243.00)}\pgfstroke
\pgfmoveto{\pgfxy(223.00,249.00)}\pgflineto{\pgfxy(137.00,-9.00)}\pgfstroke
\pgfsetdash{{0.15mm}{0.50mm}}{0mm}\pgfsetlinewidth{0.15mm}\pgfmoveto{\pgfxy(216.66,239.98)}\pgflineto{\pgfxy(160.00,70.00)}\pgfstroke
\pgfmoveto{\pgfxy(213.31,239.98)}\pgflineto{\pgfxy(180.00,140.00)}\pgfstroke
\pgfsetdash{}{0mm}\pgfsetlinewidth{0.60mm}\pgfmoveto{\pgfxy(220.00,240.00)}\pgflineto{\pgfxy(210.00,240.00)}\pgfstroke
\pgfmoveto{\pgfxy(180.00,140.00)}\pgflineto{\pgfxy(180.00,130.00)}\pgfstroke
\pgfmoveto{\pgfxy(160.00,70.00)}\pgflineto{\pgfxy(160.00,60.00)}\pgfstroke
\pgfmoveto{\pgfxy(180.00,130.00)}\pgflineto{\pgfxy(170.00,120.00)}\pgfstroke
\pgfmoveto{\pgfxy(160.00,60.00)}\pgflineto{\pgfxy(150.00,50.00)}\pgfstroke
\pgfsetdash{{2.00mm}{1.00mm}}{0mm}\pgfsetlinewidth{0.15mm}\pgfmoveto{\pgfxy(180.00,130.00)}\pgfcurveto{\pgfxy(176.83,131.48)}{\pgfxy(173.17,131.48)}{\pgfxy(170.00,130.00)}\pgfcurveto{\pgfxy(162.81,126.64)}{\pgfxy(160.81,118.06)}{\pgfxy(160.00,110.00)}\pgfcurveto{\pgfxy(158.66,96.70)}{\pgfxy(158.66,83.30)}{\pgfxy(160.00,70.00)}\pgfstroke
\pgfmoveto{\pgfxy(160.00,60.00)}\pgfcurveto{\pgfxy(156.83,61.48)}{\pgfxy(153.17,61.48)}{\pgfxy(150.00,60.00)}\pgfcurveto{\pgfxy(142.81,56.64)}{\pgfxy(140.81,48.06)}{\pgfxy(140.00,40.00)}\pgfcurveto{\pgfxy(138.66,26.70)}{\pgfxy(138.66,13.30)}{\pgfxy(140.00,0.00)}\pgfstroke
\pgfmoveto{\pgfxy(210.00,240.00)}\pgfcurveto{\pgfxy(208.42,240.74)}{\pgfxy(206.58,240.74)}{\pgfxy(205.00,240.00)}\pgfcurveto{\pgfxy(201.41,238.32)}{\pgfxy(200.41,234.03)}{\pgfxy(200.00,230.00)}\pgfcurveto{\pgfxy(199.33,223.35)}{\pgfxy(199.33,216.65)}{\pgfxy(200.00,210.00)}\pgfstroke
\pgfputat{\pgfxy(199.00,229.00)}{\pgfbox[bottom,left]{\rotatebox{0.00}{\fontsize{5.12}{6.15}\selectfont \smash{\makebox[0pt][r]{$F-1$}}}}}
\pgfputat{\pgfxy(159.00,109.00)}{\pgfbox[bottom,left]{\rotatebox{0.00}{\fontsize{5.12}{6.15}\selectfont \smash{\makebox[0pt][r]{$1+F_D$}}}}}
\pgfputat{\pgfxy(139.00,39.00)}{\pgfbox[bottom,left]{\rotatebox{0.00}{\fontsize{5.12}{6.15}\selectfont \smash{\makebox[0pt][r]{$1+F_D$}}}}}
\pgfputat{\pgfxy(215.00,243.00)}{\pgfbox[bottom,left]{\rotatebox{0.00}{\fontsize{5.12}{6.15}\selectfont \smash{\makebox[0pt]{$x$}}}}}
\pgfsetdash{{0.15mm}{0.50mm}}{0mm}\pgfmoveto{\pgfxy(210.00,240.00)}\pgflineto{\pgfxy(200.00,210.00)}\pgfstroke
\pgfsetdash{{2.00mm}{1.00mm}}{0mm}\pgfmoveto{\pgfxy(200.00,200.00)}\pgfcurveto{\pgfxy(196.83,201.48)}{\pgfxy(193.17,201.48)}{\pgfxy(190.00,200.00)}\pgfcurveto{\pgfxy(182.81,196.64)}{\pgfxy(180.81,188.06)}{\pgfxy(180.00,180.00)}\pgfcurveto{\pgfxy(178.66,166.70)}{\pgfxy(178.66,153.30)}{\pgfxy(180.00,140.00)}\pgfstroke
\pgfsetdash{}{0mm}\pgfsetlinewidth{0.60mm}\pgfmoveto{\pgfxy(200.00,200.00)}\pgflineto{\pgfxy(190.00,190.00)}\pgfstroke
\pgfmoveto{\pgfxy(200.00,210.00)}\pgflineto{\pgfxy(200.00,200.00)}\pgfstroke
\pgfputat{\pgfxy(179.00,179.00)}{\pgfbox[bottom,left]{\rotatebox{0.00}{\fontsize{5.12}{6.15}\selectfont \smash{\makebox[0pt][r]{$1+F_D$}}}}}
\pgfputat{\pgfxy(82.00,239.00)}{\pgfbox[bottom,left]{\fontsize{5.12}{6.15}\selectfont $(n,3n)$}}
\pgfputat{\pgfxy(222.00,239.00)}{\pgfbox[bottom,left]{\fontsize{5.12}{6.15}\selectfont $(n,3n)$}}
\pgfputat{\pgfxy(69.00,232.00)}{\pgfbox[bottom,left]{\fontsize{5.12}{6.15}\selectfont \makebox[0pt][r]{$(n-1,3n-1)$}}}
\pgfputat{\pgfxy(209.00,242.00)}{\pgfbox[bottom,left]{\fontsize{5.12}{6.15}\selectfont \makebox[0pt][r]{$(n-1,3n)$}}}
\pgfcircle[fill]{\pgfxy(80.00,240.00)}{0.45mm}
\pgfsetlinewidth{0.30mm}\pgfcircle[stroke]{\pgfxy(80.00,240.00)}{0.45mm}
\pgfcircle[fill]{\pgfxy(70.00,230.00)}{0.45mm}
\pgfcircle[stroke]{\pgfxy(70.00,230.00)}{0.45mm}
\pgfcircle[fill]{\pgfxy(210.00,240.00)}{0.45mm}
\pgfcircle[stroke]{\pgfxy(210.00,240.00)}{0.45mm}
\pgfcircle[fill]{\pgfxy(220.00,240.00)}{0.45mm}
\pgfcircle[stroke]{\pgfxy(220.00,240.00)}{0.45mm}
\pgfcircle[fill]{\pgfxy(140.00,0.00)}{0.45mm}
\pgfcircle[stroke]{\pgfxy(140.00,0.00)}{0.45mm}
\pgfcircle[fill]{\pgfxy(0.00,0.00)}{0.45mm}
\pgfcircle[stroke]{\pgfxy(0.00,0.00)}{0.45mm}
\pgfcircle[fill]{\pgfxy(180.00,130.00)}{0.45mm}
\pgfcircle[stroke]{\pgfxy(180.00,130.00)}{0.45mm}
\pgfcircle[fill]{\pgfxy(160.00,60.00)}{0.45mm}
\pgfcircle[stroke]{\pgfxy(160.00,60.00)}{0.45mm}
\pgfcircle[fill]{\pgfxy(200.00,200.00)}{0.45mm}
\pgfcircle[stroke]{\pgfxy(200.00,200.00)}{0.45mm}
\pgfcircle[fill]{\pgfxy(40.00,130.00)}{0.45mm}
\pgfcircle[stroke]{\pgfxy(40.00,130.00)}{0.45mm}
\pgfcircle[fill]{\pgfxy(20.00,60.00)}{0.45mm}
\pgfcircle[stroke]{\pgfxy(20.00,60.00)}{0.45mm}
\pgfputat{\pgfxy(18.00,64.00)}{\pgfbox[bottom,left]{\fontsize{5.12}{6.15}\selectfont \makebox[0pt][r]{$N_1$}}}
\pgfputat{\pgfxy(158.00,64.00)}{\pgfbox[bottom,left]{\fontsize{5.12}{6.15}\selectfont \makebox[0pt][r]{$N_1$}}}
\pgfputat{\pgfxy(38.00,134.00)}{\pgfbox[bottom,left]{\fontsize{5.12}{6.15}\selectfont \makebox[0pt][r]{$N_2$}}}
\pgfputat{\pgfxy(178.00,134.00)}{\pgfbox[bottom,left]{\fontsize{5.12}{6.15}\selectfont \makebox[0pt][r]{$N_2$}}}
\pgfputat{\pgfxy(198.00,204.00)}{\pgfbox[bottom,left]{\fontsize{5.12}{6.15}\selectfont \makebox[0pt][r]{$N_3$}}}
\end{pgfpicture}%
$$
\caption{Decomposition of $3$-Schr\"{o}der paths}
\label{fig:decomp}
\end{figure}

On the left of Figure~\ref{fig:decomp},
every path $P \in D\P\k\nkn(NE,EN)$ can be decomposed into $k$ subpaths as follows.
For $j=1, \dots, k-1$, let $N_j$ be the last north steps between lines $y=kx+(j-1)$ and $y=kx+j$. 
Consider $k$ subpaths $P_1, \dots, P_k$ 
by removing $(k-1)$ north steps $N_1, \dots, N_{k-1}$ and the last diagonal step $D$.
Because $P$ avoids peaks and valleys, 
subpaths $P_1, \dots, P_{k-1}$ are unable to end with an east step $E$,
but $P_k$ does not have this restriction.
This decomposition yields the identity
\begin{align}
F_D &= (1+F_D)^{k-1} F x. \label{eq:decom2}
\end{align}

On the right in Figure~\ref{fig:decomp},
every path $P \in E\P\k\nkn(NE,EN)$ can be decomposed into $(k+1)$ parts as follows.
For $j=1, \dots, k$, let $N_j$ be the last north steps between lines $y=kx+(j-1)$ and $y=kx+j$.
Consider $k$ subpaths $P_1, \dots, P_{k+1}$ 
by removing $k$ north steps $N_1, \dots, N_k$ and the last diagonal step $E$.
Because $P$ avoids peaks and valleys, 
subpaths $P_1, \dots, P_k$ are unable to end with an east step $E$,
but $P_{k+1}$ should not be a path of length $0$.
This decomposition yields the identity
\begin{align}
F_E &= (1+F_D)^{k} (F-1) x. \label{eq:decom3}
\end{align}



Mutiplying \eqref{eq:decom2} by $(1+F_D)$ and subtracting this by \eqref{eq:decom3}, $F_E$ is expressed by $F_D$ as follows:
\begin{align}
F_E = (1+F_D)F_D - x(1+F_D)^k. \label{eq:F_E}
\end{align}
Substituting $F_E$ in \eqref{eq:decom3} by \eqref{eq:F_E} and simplifying, we have
\begin{align}
F_D &= x(1+F_D)^{k+1} - x^2(1+F_D)^{2k-1}. \label{eq:F_D}
\end{align}
Because the right-hand sides of \eqref{eq:decom2} and \eqref{eq:F_D} are the same, $F$ is expressed by $F_D$ as follows:
\begin{align}
F &= (1+F_D)^2 - x(1+F_D)^k.
\end{align}
To obtain all generating functions, 
calculating $F_D$ from \eqref{eq:F_D} is sufficient.


\subsection*{Enumerations of $\P\one_{n,n}(D,EENN)$ and $\P\one_{n,n}(NE,EN)$}

For $k=1$, we have
\begin{align*}
F\one(x) &= \frac{(1-x)^2 - \sqrt{(1-x)^4-4x^2(1-x)}}{2x^2}\\
&= 1 + x + 2x^2 + 5x^3 + 13x^4 + 35 x^5 + 97 x^6 + 275 x^7 + \cdots,\\
F_D\one(x) 
&= \frac{(1-x)^2 - \sqrt{(1-x)^4-4x^2(1-x)}}{2x}\\
&= x + x^2 + 2x^3 + 5x^4 + 13x^5 + 35 x^6 + 97 x^7 + \cdots,\\
F_E\one(x) 
&= \frac{(1-x)^3 - (1-x)\sqrt{(1-x)^4-4x^2(1-x)}}{2x^2} - 1\\
&= x^2 + 3x^3 + 8x^4 + 22 x^5 + 62 x^6 + 178x^7 +\cdots,
\end{align*}
where the sequence of the coefficients in $F\one(x)$ and $F_E\one(x)$ are given by entries A086581 and A188464 in the OEIS \cite{Slo18}.

\begin{thm}
For $k=1$, the coefficients of $x^n$ of $F$, $F_D$, and $F_E$ are as follows:
\begin{align*}
[x^n] F\one(x)
&= \sum_{m\ge0} \frac{1}{m+1} {2m \choose m}{n+m \choose 3m}, \\
[x^n] F_D\one(x) 
&= \sum_{m\ge0} \frac{1}{m+1} {2m \choose m}{n+m-1 \choose 3m}, \\
[x^{n}] F_E\one(x) 
&= \sum_{m\ge1} \frac{1}{m+1} {2m \choose m}{n+m-1 \choose 3m-1}.
\end{align*}
\end{thm}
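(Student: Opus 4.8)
The plan is to extract coefficients directly from the radical closed form of $F_D\one(x)$ already displayed above, so that beyond the point where \eqref{eq:F_D} produces that expression no further appeal to the functional equation is needed. The first step is to factor the discriminant as
$$(1-x)^4 - 4x^2(1-x) = (1-x)^4\left(1 - \frac{4x^2}{(1-x)^3}\right),$$
which lets me rewrite
$$F_D\one(x) = \frac{(1-x)^2}{2x}\left(1 - \sqrt{1 - \frac{4x^2}{(1-x)^3}}\,\right).$$

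Next I would recognize the Catalan generating function $C(z) = \sum_{m\ge0}\frac{1}{m+1}\binom{2m}{m}z^m = \frac{1-\sqrt{1-4z}}{2z}$ and substitute the formal power series $z = \frac{x^2}{(1-x)^3}$, which has zero constant term, so the composition is a legitimate formal power series. This collapses the radical to
$$F_D\one(x) = \frac{x}{1-x}\,C\!\left(\frac{x^2}{(1-x)^3}\right) = \sum_{m\ge0}\frac{1}{m+1}\binom{2m}{m}\frac{x^{2m+1}}{(1-x)^{3m+1}}.$$
Then, using $[x^j](1-x)^{-(3m+1)} = \binom{j+3m}{3m}$ with $j = n-2m-1$, I read off
$$[x^n]F_D\one(x) = \sum_{m\ge0}\frac{1}{m+1}\binom{2m}{m}\binom{n+m-1}{3m},$$
which is the second claimed identity.

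For the remaining two formulas, the key observation is that the decomposition identity \eqref{eq:decom2} specializes at $k=1$ to $F_D = xF$, whence $F\one(x) = F_D\one(x)/x$ and therefore $[x^n]F\one(x) = [x^{n+1}]F_D\one(x) = \sum_{m\ge0}\frac{1}{m+1}\binom{2m}{m}\binom{n+m}{3m}$, the first identity. Finally, from $F = 1 + F_D + F_E$ I obtain $[x^n]F_E\one(x) = [x^n]F\one(x) - [x^n]F_D\one(x)$ for $n\ge1$, and Pascal's rule $\binom{n+m}{3m} - \binom{n+m-1}{3m} = \binom{n+m-1}{3m-1}$ (whose $m=0$ term vanishes) yields the third identity with the sum starting at $m=1$.

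The only delicate point is the second step: justifying the substitution into the Catalan series and confirming that the branch of the square root used there is the one matching the power series $F_D\one(x)$ with $F_D\one(0)=0$. The correct branch is pinned down by comparing lowest-order terms, since the expansion forces $F_D\one(x) = x + O(x^2)$; once this is checked, no genuine obstacle remains and everything else reduces to routine binomial coefficient extraction.
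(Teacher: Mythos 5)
Your proposal is correct and is essentially the paper's own proof: both arguments come down to recognizing the Catalan composition $F_D^{(1)}(x)=\frac{x}{1-x}\,C\bigl(\frac{x^2}{(1-x)^3}\bigr)$ (equivalently $F^{(1)}(x)=\frac{1}{1-x}\,C\bigl(\frac{x^2}{(1-x)^3}\bigr)$, since $F_D=xF$), extracting coefficients from $x^{2m+1}/(1-x)^{3m+1}$, and transferring the result among $F$, $F_D$, $F_E$ via $F_D=xF$ and $F=1+F_D+F_E$ with the same Pascal-rule cancellation for the third identity. The only divergence is cosmetic: the paper reaches the Catalan composition by applying its lemma \eqref{eq:H} to the quadratic functional equation $(1-x)F=1+\frac{x^2}{1-x}F^2$ obtained from \eqref{eq:decom2} and \eqref{eq:F_D}, staying entirely within formal power series and so never needing the branch-of-the-radical check that your version must (and correctly does) supply.
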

\begin{proof}
Let
$$C(x) = \frac{1-\sqrt{1-4x}}{2x} = \sum_{n\ge 0} \frac{1}{n+1}\binom{2n}{n}.$$
It is well-known that $C(x)$ satisfies
$$C(x) = 1 + xC(x)^2.$$
Suppose $H(x)$ satisfies the equation
$$a(x) H(x) = 1+ b(x) H(x)^2, $$
where $a(0)\ne0$ and $b(0)= 0$.
Evidently, $H(x)$ is expressed with $C(x)$, $a(x)$, and $b(x)$ by
\begin{align} \label{eq:H}
H(x) = \frac{1}{a(x)} C\left(\frac{b(x)}{a(x)^2}\right).
\end{align}
From \eqref{eq:decom2} and \eqref{eq:F_D}, we obtain
\begin{align*}
F_D &= x(1+F_D)^{2} - x^2(1+F_D) \quad \text{ and } \quad 
F_D = xF
\end{align*}
and, eliminating $F_D$,
$$
(1-x)F=1+\frac{x^2}{1-x}F^2.
$$
According to \eqref{eq:H}, $F=F\one(x)$ is expressed by
\begin{align*}
F\one(x) 
& = \frac{1}{1-x} C\left(\frac{x^2}{(1-x)^3}\right)
= \sum_{m \ge 0} \frac{1}{m+1}\binom{2m}{m}\frac{x^{2m}}{(1-x)^{3m+1}}
\end{align*}
and the two identities 
\begin{align*}
F_D\one(x) &= x F\one(x) \quad \text{ and } \quad
F_E\one(x) = (1-x) F\one(x) - 1,
\end{align*}
completing the proof.
\end{proof}

The concept of \emph{symmetric peaks} was presented by Fl\'{o}rez and Rodr\'{i}guez \cite{FR20} as follows.
Every peak can be extended to a unique maximal subsequence
of the form $N^{i} E^{j}$ for $i$, $j \ge 1$,
which we call the \emph{maximal mountain} of the peak.
A peak is \emph{symmetric} if its maximal mountain $N^{i} E^{j}$ satisfies $i = j$, and it is \emph{asymmetric} otherwise.
Elizalde \cite[Theorem 2.1]{Eli21} found the generating function $C_{sp,ap}(t,r,z)$ for Dyck paths with respect to the number of symmetric peaks and asymmetric peaks.
We found that the equation 
$$C_{sp,ap}(0,1,x) = 1 + x F_E\one(x)$$
holds true.


\begin{conj}
\label{conj:1}
For a positive integer $n$,
there exists a bijection between the following two sets:
\begin{enumerate}[(i)]
\item the set $E\P\one_{n, n}(NE, EN)$ of Delannoy paths from $(0,0)$ to $(n, n)$
consisting of north-steps $N=(0,1)$, east-steps $E=(1,0)$, and diagonal-steps $D=(1,1)$
that 
avoid patterns $NE$ and $EN$, 
end with east step $E$, 
and are not below $y=x$ 
\item the set of Catalan paths of size $n+1$ avoiding symmetric peaks.
\end{enumerate}
\end{conj}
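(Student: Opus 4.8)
The plan is to transport the statement to an equivalent one about honest Dyck-path families and then build the bijection between those families recursively, guided by the identity $C_{sp,ap}(0,1,x)=1+xF_E\one(x)$ that already forces the two cardinalities to agree. \textbf{Reduction via $\dtp$.} First I would invoke Corollary~\ref{cor:main} with $k=1$: the map $\dtp$ is a bijection from $\P\one_{n,n}(NE,EN)$ onto $\di{\aug{\P\one_{n,n}}(D,EENN)}$, the Catalan paths $W$ of semilength $n$ (North-East steps, weakly above $y=x$) whose augmentation $\aug W=EWN$ avoids $EENN$. Since such a $W$ begins with $N$ and ends with $E$, a factor $EENN$ of $\aug W$ can use neither the prepended $E$ (that would force $W$ to begin with $ENN$) nor the appended $N$ (that would force $W$ to end with $EEN$); hence $\aug W$ avoids $EENN$ precisely when $W$ does. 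Because $\dtp$ turns every diagonal step into a peak $NE$, a path ends with $E$ iff its image ends with $EE$. Thus $\dtp$ restricts to a bijection from $E\P\one_{n,n}(NE,EN)$ onto
$$\mathcal{D}_n=\set{W:\ W\text{ is a Catalan path of semilength }n\text{ avoiding }EENN\text{ and ending in }EE},$$
so it suffices to biject $\mathcal{D}_n$ with the set $\mathcal{S}_{n+1}$ of Catalan paths of semilength $n+1$ avoiding symmetric peaks.

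\textbf{Two mountain models.} I would describe each member of $\mathcal{D}_n$ and of $\mathcal{S}_{n+1}$ by its sequence of maximal mountains $N^{i_1}E^{j_1}\cdots N^{i_p}E^{j_p}$, subject to $i_\ell,j_\ell\ge1$ and the Catalan condition $i_1+\cdots+i_\ell\ge j_1+\cdots+j_\ell$. In this language $\mathcal{S}_{n+1}$ is cut out by the \emph{per-mountain} inequalities $i_\ell\neq j_\ell$ (total size $n+1$), whereas $\mathcal{D}_n$ is cut out by the \emph{junction} conditions ``$j_\ell=1$ or $i_{\ell+1}=1$'' for $\ell<p$, together with $j_p\ge2$ (total size $n$). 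Any bijection must therefore convert a local constraint into an adjacency constraint while absorbing the shift of the semilength by one; it cannot preserve the number of mountains, since already for $n=3$ the set $\mathcal{D}_3$ contains the one-mountain path $NNNEEE$ while every path in $\mathcal{S}_4$ has two mountains.

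\textbf{Building and checking the map.} Both families are counted by the coefficients of the same series $C(u)$ with $u=\frac{x^2}{(1-x)^3}$: since $F\one=\frac{1}{1-x}C(u)$ and $F_E\one=(1-x)F\one-1$ one has $F_E\one=C(u)-1$, and the $\mathcal{S}_{n+1}$-count is $[x^n]C(u)=\sum_{m\ge1}\frac{1}{m+1}\binom{2m}{m}\binom{n+m-1}{3m-1}$, matching the Theorem's formula for $[x^n]F_E\one$. I would therefore factor each family through the common ``decorated Catalan'' model indexed by this sum, namely a binary tree with $m$ internal nodes together with a weak composition of $n-2m$ into $3m$ parts, and compose the two resulting bijections. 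On the $\mathcal{D}_n$ side the first-return/run decomposition behind $(1-x)F\one=1+\frac{x^2}{1-x}(F\one)^2$ (equivalently \eqref{eq:decom2}--\eqref{eq:F_D}) produces this data directly; on the $\mathcal{S}_{n+1}$ side I would use Elizalde's decomposition \cite[Theorem~2.1]{Eli21} of a symmetric-peak-avoiding path along its first mountain $N^{i_1}E^{j_1}$, which, being first and nonsymmetric, satisfies $i_1>j_1$. The main obstacle is precisely the conversion of the per-mountain data $(i_\ell\neq j_\ell)$ into junction data: I would have to pin down the $3m$ ``slots'' and the single unit they carry in each model so that well-definedness (images really avoid symmetric peaks, respectively avoid $EENN$ and end in $EE$) and the $+1$ bookkeeping are simultaneously respected, and then verify by induction on semilength that the recursively defined map is not merely size-preserving but genuinely invertible.
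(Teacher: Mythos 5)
This statement is a \emph{conjecture} in the paper: the authors give no proof of it, and record only the generating-function identity $C_{sp,ap}(0,1,x)=1+xF_E\one(x)$, which shows that the two sets are equinumerous but does not exhibit a combinatorial correspondence. So your attempt has to be judged on whether it actually closes the conjecture. Your reduction step is correct and well executed: by Corollary~\ref{cor:main} with $k=1$, together with the observation (already made at the start of Section~\ref{sec:without}) that for paths weakly above the diagonal the augmentation imposes no extra constraint, $\dtp$ restricts to a bijection between $E\P\one_{n,n}(NE,EN)$ and the set $\mathcal{D}_n$ of Dyck paths of semilength $n$ avoiding $EENN$ and ending in $EE$; your argument that ``ends in $E$'' corresponds to ``ends in $EE$'' correctly uses $NE$-avoidance to exclude a final $NE$. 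Your mountain descriptions of the two sides (junction conditions ``$j_\ell=1$ or $i_{\ell+1}=1$'' with $j_p\ge 2$ for $\mathcal{D}_n$, versus per-mountain conditions $i_\ell\neq j_\ell$ for $\mathcal{S}_{n+1}$) and the remark that the number of mountains cannot be preserved are also correct.

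The genuine gap is that the bijection itself is never constructed: the proof stops exactly where the conjecture begins. Everything after the reduction is conditional (``I would factor\dots'', ``I would have to pin down\dots'', ``then verify by induction\dots''); no map from $\mathcal{D}_n$ to $\mathcal{S}_{n+1}$ is written down, so well-definedness and invertibility cannot even be checked. The counting facts you invoke --- that both sets have cardinality $\sum_{m\ge1}\frac{1}{m+1}\binom{2m}{m}\binom{n+m-1}{3m-1}$ --- only re-derive the equinumerosity the paper already has from $C_{sp,ap}(0,1,x)=1+xF_E\one(x)$; for finite sets equinumerosity trivially yields the \emph{existence} of some bijection, so the actual content of the conjecture is an explicit combinatorial correspondence, which you do not supply. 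Moreover, the assertion that the two decompositions ``produce this data directly'' is unsubstantiated: Elizalde's Theorem~2.1 in \cite{Eli21} is obtained by generating-function manipulation rather than by a bijection to decorated trees, and converting $(1-x)F\one=1+\frac{x^2}{1-x}\bigl(F\one\bigr)^2$ into a bijection between $\mathcal{D}_n$ and binary trees with $m$ internal nodes carrying a weak composition of $n-2m$ into $3m$ parts is itself a nontrivial construction that you have not carried out. In short: a sound reduction and an accurate map of the difficulty, but the conjecture remains unproved.
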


\subsection*{Enumerations of $\P\two_{n,2n}(D,EENN)$ and $\P\two_{n,2n}(NE,EN)$}

For $k=2$,
from the next theorem, 
the series $F$, $F_D$, and $F_E$ begins with
\begin{align*}
F\two(x) &= 1 + x + 3x^2 + 11x^3 + 44 x^4 + 186 x^5 + 818 x^6 + 3706 x^7 + \cdots,\\
F_D\two(x) &= x + 2x^2 + 6x^3 + 22x^4 + 89x^5 + 381 x^6 + 1694 x^7 + \cdots,\\
F_E\two(x) &= x^2 + 5x^3 + 22x^4 + 97 x^5 + 437x^6 + 2012 x^7 +\cdots,
\end{align*}
where the sequence of the coefficients in $F_D\two(x)$ is given by entry A200753 in the OEIS \cite{Slo18}.
However, the sequence
$$1, 1, 3, 11, 44, 186, 818, 3706, 17182, 81136, \ldots$$
of the coefficients in $F\two(x)$ is not in the OEIS \cite{Slo18}.

\begin{thm}
\label{thm:k=2}
For $k=2$, the coefficients of $x^n$ of $F$ and $F_D$ are as follows:
\begin{align*}
[x^n] F\two(x) &= \sum_{m\ge0} \frac{ (-1)^{n-m}}{m+1} {3m+1 \choose m}{m+1 \choose n-m}, \\
[x^n] F_D\two(x) &= \sum_{m\ge1} \frac{ (-1)^{n-m}}{m} {3m \choose m-1}{m \choose n-m}.
\end{align*}
\end{thm}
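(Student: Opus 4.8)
The plan is to specialize the general relations of this section to $k=2$ and then read off coefficients via Lagrange inversion in a well-chosen auxiliary variable. First I would note that for $k=2$ the two exponents in \eqref{eq:F_D} coincide, since $k+1=2k-1=3$, so that equation collapses to
\begin{align*}
F_D = x(1+F_D)^3 - x^2(1+F_D)^3 = x(1-x)(1+F_D)^3,
\end{align*}
while the identity $F=(1+F_D)^2-x(1+F_D)^k$ becomes $F=(1-x)(1+F_D)^2$. The key device is to set $u=x(1-x)$ and $G=1+F_D$, so that the first relation reads $G=1+uG^3$, equivalently $W:=G-1=F_D$ satisfies $W=u\,(1+W)^3$. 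Since $u=x-x^2$ has no constant term and lowest term $x$, the substitution $u\mapsto x(1-x)$ is a legitimate operation on formal power series, and by uniqueness of the power-series solution with $F_D(0)=0$, the Lagrange-inversion solution $W(u)$ of $W=u(1+W)^3$ becomes exactly $F_D(x)$ after this substitution.

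Next I would apply the Lagrange inversion theorem to $W=u\phi(W)$ with $\phi(w)=(1+w)^3$, in the form $[u^m]H(W)=\tfrac1m[w^{m-1}]\,H'(w)\phi(w)^m$ for $m\ge1$. Taking $H(w)=w$ gives
\begin{align*}
[u^m]F_D=\frac1m[w^{m-1}](1+w)^{3m}=\frac1m\binom{3m}{m-1},
\end{align*}
and taking $H(w)=(1+w)^2$ gives
\begin{align*}
[u^m]G^2=\frac2m[w^{m-1}](1+w)^{3m+1}=\frac2m\binom{3m+1}{m-1}=\frac1{m+1}\binom{3m+1}{m},
\end{align*}
where the last step is the routine simplification $\tfrac2m\binom{3m+1}{m-1}=\tfrac1{m+1}\binom{3m+1}{m}$.

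Finally I would re-expand in $x$ using $u^m=x^m(1-x)^m$, together with the elementary coefficient $[x^n]\bigl(x^m(1-x)^{m+r}\bigr)=(-1)^{n-m}\binom{m+r}{n-m}$. Applying this with $r=0$ to $F_D=\sum_{m\ge1}\bigl([u^m]F_D\bigr)u^m$ produces the stated formula for $[x^n]F_D\two(x)$, and applying it with $r=1$ to $F=(1-x)G^2=\sum_{m\ge0}\bigl([u^m]G^2\bigr)x^m(1-x)^{m+1}$ produces the stated formula for $[x^n]F\two(x)$. The only genuinely delicate points are justifying that the Lagrange-inversion solution in $u$ agrees with the $x$-series solution after the composite substitution $u=x(1-x)$, and keeping the index bookkeeping straight through the binomial simplification; everything else is mechanical.
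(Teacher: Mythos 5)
Your proposal is correct and takes essentially the same route as the paper: specialize \eqref{eq:F_D} to $k=2$ so that it collapses to $F_D=(x-x^2)(1+F_D)^3$, substitute $t=x(1-x)$, apply Lagrange inversion, and re-expand in powers of $x$ via $[x^n]\bigl(x^m(1-x)^{m+r}\bigr)=(-1)^{n-m}\binom{m+r}{n-m}$. The only cosmetic difference is in how $F$ is extracted: the paper uses \eqref{eq:decom2} to form a second functional equation $B(t)=t(1-B(t))^{-2}$ for $B=xF^{(2)}(x)$ and inverts that, whereas you invert the single equation for $F_D$ with $H(w)=(1+w)^2$ and use $F=(1-x)(1+F_D)^2$ — both yield the same sums after the index shift $m\mapsto m+1$.
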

\begin{proof}
Setting $k=2$ in \eqref{eq:F_D}, we obtain
\begin{align*}
F_D\two(x) &= (x-x^2)(1+F_D\two(x))^3
\end{align*}
and, letting $t=x-x^2$,
\begin{align*}
A(t) = t(1+A(t))^3,
\end{align*}
where the power series $A(t)$ satisfies $A(x-x^2)=F_D\two(x)$.
Setting $k=2$ in \eqref{eq:decom2}, we obtain
$$
F_D\two(x) = \frac{xF\two(x)}{1-xF\two(x)}.
$$
Substituting $F_D\two(x)$ in \eqref{eq:F_D} with it, we obtain
\begin{align*}
xF\two(x)=(x-x^2) \left(\frac{1}{1-xF\two(x)}\right)^2
\end{align*}
and, letting $t=x-x^2$,
\begin{align*}
B(t) = t\left(\frac{1}{1-B(t)}\right)^2,
\end{align*}
where the power series $B(t)$ satisfies $B(x-x^2)=xF\two(x)$.

Using the Lagrange inversion formula, we obtain the coefficients of $t^n$ in $A(t)$ and $B(t)$.
Replacing $t$ with $x-x^2$, we obtain the coefficients of $x^n$ in $F_D\two(x)$ and $xF\two(x)$.
\end{proof}

The function $F_D\two(x)$ is mentioned in \cite[Theorem 3.7]{MS15}.
It is shown here that $F_D\two(x)$ is the same as the generating function of $\mathcal{IS}_n(102)$.
In the remark of \cite[p.168]{MS15}, they were unable to find other combinatorial interpretations in the literature.

\begin{conj}
\label{conj:2}
For a positive integer $n$,
there exists a bijection between the following two sets:
\begin{enumerate}[(i)]
\item the set $D\P\two_{n, 2n}(NE, EN)$ of Delannoy paths from $(0,0)$ to $(n, 2n)$
consisting of north-steps $N=(0,1)$, east-steps $E=(1,0)$, and diagonal-steps $D=(1,1)$ 
that 
avoid the patterns $NE$ and $EN$, 
end with diagonal step $D$, 
and are not below $y=2x$ and
\item the set $\mathcal{IS}_n(102)$ of inversion sequences of length $n$ avoiding the pattern $102$.
\end{enumerate}
\end{conj}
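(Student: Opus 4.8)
The plan is to prove Conjecture~\ref{conj:2} by matching \emph{positive} recursive decompositions of the two families, thereby sidestepping the alternating signs that appear in the closed form of Theorem~\ref{thm:k=2}. Although \eqref{eq:F_D} reads $F_D\two=(x-x^2)(1+F_D\two)^3$ and the resulting formula for $[x^n]F_D\two(x)$ carries the factor $(-1)^{n-m}$, those signs arise only when $F\two$ and $F_E\two$ are eliminated to obtain a closed form; the decompositions \eqref{eq:decom2} and \eqref{eq:decom3} themselves are sign-free. First I would record, for $k=2$, the positive mutual recursion
\[
F_D\two=(1+F_D\two)\,F\two\,x,\qquad F_E\two=(1+F_D\two)^2(F\two-1)\,x,\qquad F\two=1+F_D\two+F_E\two,
\]
and read each identity as an explicit combinatorial decomposition: the peeling in Figure~\ref{fig:decomp} removes the last diagonal step together with a distinguished north step and splits a $D$-ending path into a (possibly empty) subpath forbidden to end in $E$ and an unrestricted subpath, with the analogous splitting for $E$-ending paths. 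The aim is to turn this into an inductive description of $D\P\two_{n,2n}(NE,EN)$ as a structured sequence of smaller such paths.

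Next I would extract the parallel decomposition of $\mathcal{IS}_n(102)$. The generating function of $\mathcal{IS}_n(102)$ is shown in \cite[Theorem 3.7]{MS15} to equal $F_D\two(x)$, and that derivation already rests on a recursive splitting of $102$-avoiding inversion sequences (typically by the position or value of the last entry, or of the largest entry). I would reorganize that splitting into three coupled subfamilies whose counting series satisfy exactly the system above, so that one subfamily plays the role of ``paths ending in $D$'', another of ``paths ending in $E$'', and their union of ``all paths''. With both families placed on the same positive recursive footing, the bijection is defined by induction on $n$: an element of one family is decomposed according to its recursion, each piece is mapped by the inductive hypothesis, and the images are reassembled by the matching rule on the other side. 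The constraints must be tracked throughout --- the conditions ``no peaks $NE$, no valleys $EN$'' and ``not below $y=2x$'' on the path side should correspond, piece by piece, to ``avoids $102$'' together with the defining bounds $0\le e_i<i$ on the inversion-sequence side.

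As an alternative scaffold it is worth noting that the substitution $t=x-x^2$ turns \eqref{eq:F_D} into the ternary-tree equation $A=t(1+A)^3$, so both families share a common core of ternary trees with $m$ nodes, counted by $\tfrac{1}{2m+1}\binom{3m}{m}$; the factor $(x-x^2)^m$ then distributes $n-m$ ``doublings'' among the $m$ nodes. This viewpoint is attractive because the core is forced on both sides, but it reintroduces the signs through $(x-x^2)^m$, so using it directly would require a sign-reversing involution on the overcounted configurations; I regard it as a cross-check rather than the main route.

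The hard part will be the translation step in the middle paragraph: the recursion of \cite{MS15} is obtained analytically, and turning it into a combinatorial splitting of $\mathcal{IS}_n(102)$ into three subfamilies that literally realize the same rules as the geometric peeling of paths --- in particular identifying the inversion-sequence statistic that distinguishes the ``$D$-ending'' from the ``$E$-ending'' case --- is where the genuine difficulty lies. I would fix the correspondence experimentally on $n\le 5$, where both sides are small, to determine the exact statistic before committing to the general inductive argument.
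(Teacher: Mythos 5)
First, note what you are comparing against: the paper does \emph{not} prove this statement --- it is stated as a conjecture precisely because the only thing known is an equality of generating functions. Theorem~\ref{thm:k=2} together with \cite[Theorem 3.7]{MS15} shows that $D\P\two_{n,2n}(NE,EN)$ and $\mathcal{IS}_n(102)$ are both counted by $F_D\two(x)$, so in the literal set-theoretic sense ``a bijection exists'' is already immediate and requires none of your machinery; the open content of Conjecture~\ref{conj:2} is an \emph{explicit, structure-preserving} bijection. Your proposal does not supply one. The path side of your plan is sound: the system $F_D\two = x(1+F_D\two)F\two$, $F_E\two = x(1+F_D\two)^2(F\two-1)$, $F\two = 1+F_D\two+F_E\two$ is a correct, sign-free reading of \eqref{eq:decom2} and \eqref{eq:decom3}, and matching combinatorial specifications is a legitimate strategy for building a bijection. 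But the decisive step --- reorganizing $102$-avoiding inversion sequences into three coupled subfamilies that satisfy this exact system, including identifying which statistic on inversion sequences plays the role of ``ends in $D$'' versus ``ends in $E$'' --- is exactly the step you defer (``where the genuine difficulty lies,'' to be ``fixed experimentally on $n\le 5$''). That deferred step is not a technicality; it is the entire conjecture. The derivation in \cite{MS15} is analytic (a functional equation resolved by algebraic manipulation, not a structural decomposition), and there is no evidence offered that it refines into subfamilies obeying the path-side rules; the absence of such a known refinement is the reason the statement remains open.

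Your fallback via the substitution $t=x-x^2$ and the ternary-tree equation $A=t(1+A)^3$ has the same status: as you yourself observe, expanding $(x-x^2)^m$ reintroduces the signs $(-1)^{n-m}\binom{m}{n-m}$, so this route requires a sign-reversing involution on decorated ternary trees that is likewise not constructed. In summary, the proposal is a reasonable research program, and its first paragraph correctly isolates a positive algebraic system that any specification-matching bijection would have to realize on the inversion-sequence side; but as a proof it has a genuine gap, namely the missing combinatorial decomposition of $\mathcal{IS}_n(102)$, and nothing in the paper (or in \cite{MS15}) fills it.
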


\subsection*{Enumerations of $\P\three_{n,3n}(D,EENN)$ and $\P\three_{n,3n}(NE,EN)$}
From Equation~\eqref{eq:F_D} for $k=3$,
we obtain
\begin{align*}
F_D &= x(1+F_D)^{4} - x^2(1+F_D)^{5}. 
\end{align*}

According to a computer program, we find that the series $F$, $F_D$, and $F_E$ begins with
\begin{align*}
F\three(x) &= 1+x+4 x^{2}+20 x^{3}+111 x^{4}+657 x^{5}+4065 x^{6} + 25981 x^{7}+ \cdots, \\
F_D\three(x) &= x+3 x^{2}+13 x^{3}+67 x^{4}+380 x^{5}+2288 x^{6}+ 14351 x^{7}+ \cdots, \\
F_E\three(x) &= x^{2}+7 x^{3}+44 x^{4}+277 x^{5}+1777 x^{6} +11630 x^{7} + \cdots,
\end{align*}
where the sequence of the coefficients in $F_D\three(x)$ is given by entry A200754 in the OEIS \cite{Slo18}.

Note that, for $k \ge 3$, we were unable to find a solution for \eqref{eq:F_D}.

\section*{Acknowledgements}
For the first author, this work was partially supported by the 2018 Research Grant from Kangwon National University(No. 520180091) and the National Research Foundation of Korea (NRF) grant funded by the Korea government (MSIT) (No. 2020R1F1A1A01065817).
For the second author, this work was supported by the National Research Foundation of Korea (NRF) grant funded by the Korea government (MSIT) (No. 2017R1C1B2008269).



\end{document}